\theoremstyle{definition}
\newtheorem{defi}{Definition}[section]
\newtheorem{lem}[defi]{Lemma}
\newtheorem{teo}[defi]{Theorem}
\newtheorem*{nnteo}{Main Theorem}
\newtheorem{prop}[defi]{Proposition}
\newtheorem{ex}[defi]{Example}
\theoremstyle{remark}
\newtheorem{obs}[defi]{Remark}
\DeclareMathOperator{\rad}{\operatorname{rad}}
\DeclareMathOperator{\Ga}{\Gamma}
\DeclareMathOperator{\Rf}{\mathfrak{R}}
\DeclareMathOperator{\Rr}{\mathcal{R}}
\DeclareMathOperator{\Sc}{\mathcal{S}} 
\DeclareMathOperator{\lp}{\operatorname{lb}}
\title[Gröbner bases and irreducible morphisms]{Gröbner bases for mesh relations and applications to compositions of irreducible morphisms}
\author[Chust]{Viktor Chust}
\address{(Viktor Chust) Institute of Mathematics and Statistics - University of São Paulo, São Paulo, Brazil}
\thanks{corresponding author: viktorchust.math@gmail.com}
\author[Coelho]{Flávio U. Coelho}
\address{(Flávio U. Coelho) Institute of Mathematics and Statistics - University of São Paulo, São Paulo, Brazil}
\date{}
\subjclass[2020]{Primary 16G70, Secondary 16G10, 13P10}
\keywords{irreducible morphisms, compositions of irreducible morphisms, mesh category, Gröbner bases}
\begin{document}

\maketitle

\begin{abstract}
We give a necessary condition for the existence of a path of $n$ irreducible morphisms between indecomposable modules whose composition lies in the ($n+1$)-power of the radical. In order to do that, we consider the general criterion given by C. Chaio, P. Le Meur and S. Trepode, which relates these compositions with zero paths in the mesh category, and then study morphisms in the mesh category by providing Gröbner bases for the subspaces generated by the mesh relations.
    
\end{abstract}

\section*{Introduction}

The theory of Gröbner bases was introduced by B. Buchberger (whose advisor was W. Gröbner) in his thesis \cite{Buch} in order to study certain problems regarding ideals in polynomial rings over a field. Namely, the key feature of this theory is an algorithm (called the {\it Buchberger algorithm}) which yields a set of generators for such an ideal, called a \textit{Gröbner basis}, and allows effective computations of the solutions for these problems. After its initial inception, applications and generalizations of Gröbner bases have been discovered across multiple areas in Mathematics, such as ideals in free associative algebras (in connection with Bergman's Diamond Lemma - \cite{Berg}), syzygies and module theory over more general rings, and even far-reaching ones, such as the coloring problem in graph theory - see \cite{AdLous}, e.g., for these last two. We also recommend \cite{AdLous, CLS} for a proper introduction to Gröbner bases theory.

We also highlight the use of this theory in \cite{FFG,Gre,LM}, which, in the context of Representation Theory of Algebras, study bound path algebras over quivers by obtaining Gröbner bases for the ideals generated by relations.

Inspired by this last application, here we study Gröbner bases for the spaces of relations which the define the so-called \textit{mesh categories}. This has allowed us to obtain new results about the problem in Representation Theory of compositions of irreducible morphisms, which we now explain. Here we study the representation theory of finitely generated, right $A$-modules, where $A$ stands for a finite-dimensional algebra over an algebraically closed field $k$. We will assume the reader is familiar with basic concepts of Representation Theory and Auslander-Reiten theory. For unexplained concepts of these theories, we recommend \cite{AC, AC2, ARS}.

If $X,Y$ are two indecomposable $A$-modules, then a {\it radical morphism} $X \rightarrow Y$ is simply a non-isomorphism between these two modules. Let $\rad_A$ denote the \textit{radical ideal} of the module category, that is, the ideal formed by all radical morphisms between $A$-modules. We also denote by $\rad_A^n$ the $n$-th power of the ideal $\rad_A$, and by $\rad_A^{\infty}$ the intersection of the $\rad_A^n$ for all $n \in \mathbb{N}$. Then Auslander-Reiten theory defines the \textit{irreducible morphisms} between two $A$-modules: a morphism $f:X \rightarrow Y$ between two indecomposable $A$-modules is irreducible if and only if it belongs to $\rad_A(X,Y) \setminus \rad_A^2(X,Y)$.

However, what would be a similar pattern for more than one irreducible morphism does not happen: although we can guarantee that the composite of $n$ irreducible morphisms between indecomposable modules belongs to $\rad^n$, if $n>1$ it is not true in general that it does not belong to $\rad^{n+1}$. By the way, \cite{CCT1} gives an example of two irreducible morphisms whose composite belongs to $\rad^{\infty}$. Studying when there might be non-trivial composites of $n$ irreducible morphisms lying in $\rad^{n+1}$ has become a general problem.

Although a classical result by K. Igusa and G. Todorov (\cite{IT1}) rules out composites of $n$ irreducible morphisms in $\rad^{n+1}$ along a sectional path, this problem began to be directly addressed in \cite{CCT1}, where the authors gave necessary and sufficient conditions for the existence of a non-zero composite of 2 irreducible morphisms in $\rad^3$.

After \cite{CCT1}, many other works introduced results about the problem of composites. (In \cite{CCs}, we have included a list of the main sources in that subject). Most of these articles study this problem in particular cases, e. g., for small values of $n$, for specific paths along which the composite is taken or by imposing conditions on the Auslander-Reiten component.

A general criterion that holds for all values of $n$ was given in \cite{CMT1}, Proposition 5.1, and later extended to perfect fields in \cite{CMT2}, Proposition 3 by the same authors C. Chaio, P. Le Meur and S. Trepode. By using the techniques of \textit{coverings of quivers}, \textit{mesh categories} and Riedtmann's \textit{well-behaved functors} (which were originally introduced in \cite{Rie, BG}), they have conditioned the existence of compositions of $n$ irreducible morphisms in $\rad^{n+1}$ to certain zero paths of morphisms in the mesh category.

In this article, we build on this idea, studying zero paths in the mesh category as we obtain \textit{Gröbner bases} for the spaces of morphisms identified with zero in the mesh category. This in particular has led us into the discovery of a new necessary condition for the existence of $n$ irreducible morphisms between indecomposable modules whose composite belongs to $\rad^{n+1}$:

\begin{nnteo}
\label{th:main}
Let $\Gamma$ be a component of the Auslander-Reiten quiver of $A$, and let $X,Y$ be two indecomposable modules of $\Gamma$. Then the following hold:

\begin{enumerate}

\item Suppose that for every non-projective module $Z$ from $\Gamma$ such that there are paths $X \rightsquigarrow \tau Z$ and $Z \rightsquigarrow Y$, the Auslander-Reiten sequence ending in $Z$ has a middle term with at least two indecomposable summands. Then, if $X=X_0 \xrightarrow{h_1} X_1 \xrightarrow{h_2} \ldots \xrightarrow{h_n} X_n = Y$ is a path of irreducible morphisms, it holds that $h_n \ldots h_1 \notin \rad^{n+1}(X,Y)$.

\item Let $X \xrightarrow{h_1} \ldots \xrightarrow{h_n} Y$ be a path of irreducible morphisms in $\Gamma$ such that $h_n \ldots h_1 \in \rad^{n+1}(X,Y)$. Then there are a non-projective module $Z$ in $\Gamma$ and paths $\gamma_1: X \rightsquigarrow \tau Z$ and $\gamma_2: Z \rightsquigarrow Y$ over $\Gamma$ such that:
    
    \begin{enumerate}
        \item If $0 \rightarrow \tau Z \rightarrow E \rightarrow Z \rightarrow 0$ is the Auslander-Reiten sequence ending at $Z$, then $E$ is indecomposable.

        \item The path $\xymatrix{X \ar@{~>}[r]^{\gamma_1} & \tau Z \ar[r] & E \ar[r] & Z \ar@{~>}[r]^{\gamma_2} & Y}$  thus obtained is \textit{homotopic (see definition in \S~\ref{subsec:homotopy})} to the path given by the $h_i$'s.
    \end{enumerate}

    \begin{displaymath}
        \xymatrix{ &&\tau Z \ar[dr] \ar@{.}[rr]&& Z \ar@{~>}[ddrr] && \\
        &&& E \ar[ur] &&& \\
        X \ar@{~>}[uurr] &&&&&& Y}
    \end{displaymath}
    
\end{enumerate}
\end{nnteo}

So our goal here is to develop the techniques and results necessary to prove the theorem above. This article is organized as follows: in Section~\ref{sec:prelim} we establish definitions and notations regarding \textit{quivers}, \textit{translation quivers}, \textit{homotopy}, \textit{coverings} and \textit{mesh categories}. In Section~\ref{sec:some grobner}, we recall some ideas from Gröbner bases theory, while fixing notation. In Section~\ref{sec:mesh-canonical rel}, we introduce the concept of \textit{mesh-canonical relations} on morphisms in the mesh category. Then, in Section~\ref{sec:grobner bases for mesh}, we prove (Theorem~\ref{th:mesh canonicas eh grobner}) that, with a certain condition, mesh-canonical relations form a Gröbner basis for the space of morphisms identified with zero in the mesh category, relatively to a special order which we call \textit{mesh-lexicographic}. Section~\ref{sec:zero paths} is devoted to state and prove Theorem~\ref{th:his neq 0}, using this Gröbner basis to study the existence of combinations of paths vanishing in the mesh category. This will be the key step towards concluding the proof of our main theorem above on Section~\ref{sec:proof of main thm}.

\section{Preliminaries}
\label{sec:prelim}

\subsection{Quivers}
\label{subsec:quiv}

A \textbf{quiver} $Q$ is a quadruple $Q=(Q_0,Q_1,s,e)$, where $Q_0$ is the set of \textbf{vertices} of $Q$, $Q_1$ is the set of \textbf{arrows}, and $s,e:Q_1 \rightarrow Q_0$ are two functions which determine, respectively, the \textbf{start} and the \textbf{end} of an arrow. To each arrow $\alpha \in Q_1$, we can consider its formal inverse $\alpha^{-1}$, and we establish by convention that $s(\alpha^{-1}) = e(\alpha)$ and $e(\alpha^{-1}) = s(\alpha)$.

By a \textbf{walk} over $Q$ we mean a sequence $\beta_n \ldots \beta_1$, where, for every $1 \leq i \leq n$, $\beta_i$ is either an arrow or the inverse of an arrow, and for every $2 \leq i \leq n$, $s(\beta_i) = e(\beta_{i-1})$.
If $w= \beta_n \ldots \beta_1$ is a walk, we extend the notations used for start and end vertices: $s(w) \doteq s(\beta_1)$ and $e(w) \doteq e(\beta_n)$. If $w$ and $w'$ are two walks and $e(w) = s(w')$, we can define the composition $w'w$ naturally by juxtaposition.

If $\beta_n \ldots \beta_1$ is a walk and all $\beta_i's$ are arrows (and not inverses of arrows), then we say that this walk is a \textbf{path of length $n$}. Additionally, one associates to each vertex $x$ of $Q$ a trivial \textbf{path of length 0}, denoted by $\epsilon_x$. Of course, $s(\epsilon_x)=e(\epsilon_x) =x$.

If $x \rightsquigarrow y$ is a path from $x$ to $y$, then we say that $x$ is a \textbf{predecessor} of $y$ and that $y$ is a \textbf{successor} of $x$. If this path has length exactly 1 (i.e., it is an arrow), then we use the terms \textbf{immediate predecessor} and \textbf{immediate successor}. Given a vertex $z$, we denote by $z^{-}$ and $z^{+}$, respectively, its sets of immediate predecessors and immediate successors.

\subsection{Translation quivers}
\label{subsec:tr quivs}

A \textbf{translation quiver} is a quiver without loops $\Gamma$ equipped with a function (called \textit{translation}) $\tau: \operatorname{dom} \tau \subseteq \Gamma_0 \rightarrow \Gamma_0$ which is partially defined on the vertices of $\Gamma$. The vertices which do not belong to the domain of $\tau$ are called \textit{projective}, the vertices which fall outside the range of $\tau$ are called \textit{injective}. Also, $\Gamma$ and $\tau$ must satisfy that $\tau: x \mapsto \tau(x)$ is a bijection between non-projective and non-injective vertices, and also that, for each pair of vertices $x$ and $y$ of $\Gamma$, with $x$ non-projective, there is a bijection $\sigma: \alpha \mapsto \sigma(\alpha)$ between arrows of the form $y \rightarrow x$ and arrows of the form $\tau(x) \rightarrow y$. Also, we shall be always assuming that every translation quiver is \textit{locally finite}. That means that for every vertex $x$ of $\Gamma$, there is only a finite number of arrows starting at $x$ and a finite number ending at it. 

Note that Auslander-Reiten quivers, as well as their connected components, yield examples of locally finite translation quivers, with $\tau$ being the Auslander-Reiten translation.

In general, we will always use the letters $\tau$ e $\sigma$ to denote the functions associated to a given translation quiver $\Gamma$.

We still have to give some additional definitions regarding translation quivers. Let $\Gamma$ be a translation quiver. If $x_0 \xrightarrow{\alpha_1} x_1 \xrightarrow{\alpha_2} \ldots \xrightarrow{\alpha_n} x_n$ is a path over $\Gamma$, we call it a \textbf{sectional path} if $x_i \neq \tau x_{i+2}$ for every $i$ between $0$ and $n-2$. 

Also, given a non-projective vertex $x$ of $\Gamma$, the \textbf{mesh} ending at $x$ is the full subquiver of $\Gamma$ determined by all the arrows that end at $x$ and all the arrows that start at $\tau x$.

    \begin{displaymath}
        \xymatrix{ && x_1 \ar[ddrr]^{\alpha_1}&& \\
        && x_2 \ar[drr]_{\alpha_2}&&\\
        \tau x \ar[uurr]^{\sigma(\alpha_1)} \ar[urr]_{\sigma(\alpha_2)} \ar[drr]_{\sigma(\alpha_r)}&& \vdots && x \\
        &&x_r \ar[urr]_{\alpha_r} &&}
    \end{displaymath}

We introduce an auxiliary definition that will help us to state our main results to follow, which is that of \textit{open} and \textit{closed} paths. We have chosen these names with a very loose inspiration from the geometrical realization of a translation quiver introduced by \cite{BG}.

\begin{defi}
    Let $\Gamma$ be a translation quiver. We say that a path $x = x_0 \rightarrow x_1 \rightarrow \ldots \rightarrow x_m = y$ in $\Gamma$ is \textbf{closed} if for every time there is $0 \leq i \leq m-2$ such that $x_i = \tau x_{i+2}$, we have that $x_i$ has at least two immediate successors in $\Gamma$. If this does not happen, we say that the path is \textbf{open}.
\end{defi}

\subsection{Homotopy}
\label{subsec:homotopy}

Let $\Gamma$ be a translation quiver. Following \cite{BG, CMT1}, we are going to define the concept of \textit{homotopy} in this context. Consider the smallest equivalence relation $\sim$ between walks over $\Gamma$ which satisfies conditions a) to d) below:

\begin{enumerate}
    \item [a)] If $\alpha: x \rightarrow y$ is an arrow of $\Gamma$, then $\alpha \alpha^{-1} \sim \epsilon_y$ e $\alpha^{-1} \alpha \sim \epsilon_x$. (Here $\epsilon_x$ e $\epsilon_y$ denote the paths of length zero over the vertices $x$ and $y$ respectively).
    
    \item [b)] If $x$ is a non-projective vertex and $\alpha: z \rightarrow x$, $\alpha': z' \rightarrow x$ are two arrows ending at $x$, then $\alpha (\sigma \alpha) \sim \alpha' (\sigma \alpha')$.
    
    \item [c)] If $\gamma_1,\gamma,\gamma',\gamma_2$ are walks over $\Gamma$ such that $\gamma \sim \gamma'$ and the compositions $\gamma_1 \gamma \gamma_2$ and $\gamma_1 \gamma' \gamma_2$ are well-defined, then $\gamma_1 \gamma \gamma_2 \sim \gamma_1 \gamma' \gamma_2$.

    \item [d)] If $\alpha,\beta: x \rightarrow y$ are two arrows with the same starting and ending vertex, then $\alpha \sim \beta$.
\end{enumerate}

The relation $\sim$ is then called the \textbf{homotopy} between walks over $\Gamma$.

\subsection{Coverings}
\label{subsec:coverings}

Later on we will need the concept of {\it coverings of translation quivers} (\cite{Rie,BG,CMT1}). Let $\Delta$ and $\Gamma$ be two translation quivers. We say that a quiver morphism $\pi: \Delta \rightarrow \Gamma$ is a \textbf{covering} if: (1) A vertex $x$ of $\Delta$ is projective (or injective, respectively) if and only if $\pi(x)$ is also projective (or injective, respectively); (2) $\pi$ commutes with the translation functions in $\Delta$ and $\Gamma$ (wherever these are defined); and (3) For every vertex $x$ of $\Delta$, the map $\alpha \mapsto \pi(\alpha)$ induces a bijection from the set of arrows of $\Delta$ starting at $x$ (or ending at $x$, respectively) and the set of arrows of $\Gamma$ starting at $\pi(x)$ (or ending at $\pi(x)$, respectively). 

As one can see from this definition, a covering of a translation quiver $\Gamma$ is another translation quiver $\Delta$ which locally resembles $\Gamma$, much like the definition of covering spaces in Algebraic Topology.

\subsection{Mesh categories}
\label{subsec:mesh categories}

Let $\Delta$ be a translation quiver. Following \cite{Rie,BG,CMT1}, we are going to define the {\bf path category} $k\Delta$ and the {\bf mesh category} $k(\Delta)$ over $\Lambda$.

\begin{itemize}
    \item The objects of $k\Delta$ are the vertices of $\Delta$;

    \item Given two vertices $x,y \in \Delta_0$, the morphisms $x \rightarrow y$ are given by the formal $k$-linear combinations of the paths from $x$ to $y$ in $\Delta$;

    \item $k(\Delta)$ is the quotient of the category $k\Delta$ by the ideal generated by all the morphisms $m_x \doteq \sum_{\alpha \in \Delta_1,s(\alpha) = x} (\sigma^{-1}\alpha) \alpha$, with $x$ running through non-injective vertices of $\Delta$. We call $m_x$ as a \textbf{mesh relation} and the ideal which they generate as the \textbf{mesh ideal}.
\end{itemize}

For every $n \geq 1$,  define the ideal $\Rf^n k(\Delta)$ as the ideal of $k(\Delta)$ generated by the classes of paths of length $n$ over $\Delta$. The ideal $\Rf k(\Delta) \doteq \Rf^1 k(\Delta)$ is usually called the {\it radical of the mesh category}, although in general it is not precisely a radical in the category-theoretic sense.

\section{Some Gröbner bases theory}
\label{sec:some grobner}

In this section we shall define some concepts and fix some notations regarding Gröbner bases. For the reader who is unfamiliar with Gröbner bases or wants more details about the approach we follow here, we refer to the books \cite{AdLous, CLS}.

We have fixed $k$ a field. Consider a finite-dimensional space $V$ over $k$. Then choose \textbf{$k$-basis} $\mathcal{B} = \{v_1,\ldots,v_n\}$ of $V$, and choose a \textbf{total order} $v_1>v_2 > \ldots > v_n$ over the elements of $\mathcal{B}$.

Given $v \in V$, let $\alpha_1,\ldots,\alpha_n \in k$ be the only scalars such that $v = \alpha_1 v_1 + \ldots + \alpha_n v_n$. If $v \neq 0$, there is a least $i$ such that $\alpha_i \neq 0$, and we can define $\lp(v) = v_i$, which is the \textbf{leading basis element} in the decomposition of $v$.

\begin{defi}
    Let $f,g,r \in V$ with $g \neq 0$. Take $v_i = \lp(g)$, let $\alpha_i \in k$ be the coefficient of $v_i$ in $f$ and $\beta_i \in k$ be the coefficient of $v_i$ in $g$. Then we say that $f$ \textbf{simply reduces} to $r$ modulo $g$, and we write $f \xrightarrow{g} r$, if $\alpha_i \neq 0$ and it holds that $r = f - \frac{\alpha_i}{\beta_i} g$. We say that $f$ {\bf reduces} to $r$ modulo a finite set $G$ and denote $f \xrightarrow{G}_+ r$ if there is a finite sequence of reductions modulo elements in $G$ starting from $f$ and ending with $r$.

\end{defi}

An element $r \in V$ is called \textbf{reduced} relatively to a finite set $G \subseteq V \setminus \{0\}$ if $r=0$ or if $r$ cannot be reduced modulo $G$. 

If $f \xrightarrow{G}_+ r$ and $r$ is reduced relative to $G$, then we say that $r$ is a \textbf{remainder} for $f$ relatively to reduction modulo $G$.

\begin{teo}
    Let $W$ be a subspace of $V$. The following conditions are equivalent for a finite subset $G = \{g_1,\ldots,g_s\}$ of $W \setminus \{0\}$:

    \begin{enumerate}
        \item For every $f \in W \setminus \{0\}$ there is an index $i$ between 1 and $s$ such that $\lp(f) = \lp(g_i)$.

        \item It holds that $f \in W$ if and only if $f \xrightarrow{G}_+ 0$.

        \item For every $f \in V$, the remainders of possible reductions of $f$ modulo $G$ coincide.
    \end{enumerate}
\end{teo}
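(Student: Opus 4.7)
My plan is to prove the equivalence by establishing the cyclic implications $(1) \Rightarrow (2) \Rightarrow (3) \Rightarrow (1)$, implicitly using that $G$ generates $W$ as a $k$-subspace (a standard assumption on a candidate Gröbner basis, which is in any case forced by $(2)$).

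For $(1) \Rightarrow (2)$, the direction $f \xrightarrow{G}_+ 0 \Rightarrow f \in W$ is immediate, as each reduction step subtracts a scalar multiple of some $g_i \in W$. For the converse, given $f \in W \setminus \{0\}$, condition $(1)$ yields a $g_i$ with $\lp(g_i) = \lp(f)$, so a single reduction produces $f_1 \in W$ with either $f_1 = 0$ or $\lp(f_1)$ strictly below $\lp(f)$ in the total order on $\mathcal{B}$; iterating then terminates at $0$ by well-foundedness, since $\mathcal{B}$ is finite.

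For $(2) \Rightarrow (3)$, I will use a standard confluence argument. If $r_1$ and $r_2$ are two remainders of some $f \in V$, then $r_1 - r_2 = (f - r_2) - (f - r_1)$ is a linear combination of elements of $G$ and so lies in $W$; by $(2)$ it reduces to $0$. Were $r_1 \neq r_2$, a first reduction step would exist, requiring some $\lp(g_i)$ to have nonzero coefficient in $r_1 - r_2$; but the reducedness of both $r_1$ and $r_2$ forces that coefficient to vanish in each, hence in their difference, a contradiction.

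The hard part will be $(3) \Rightarrow (1)$. I plan to argue contrapositively: if some $f \in W \setminus \{0\}$ has $\lp(f) \neq \lp(g_i)$ for every $i$, then after reducing as far as possible I may assume $f$ is itself reduced, and I fix a representation $f = \sum_k c_k g_{i_k}$ with $c_k \in k^\times$ that minimizes $v := \max_k \lp(g_{i_k})$. Either the coefficient of $v$ in $f$ is nonzero --- in which case $\lp(f) = v = \lp(g_{i_k})$ for some $k$, contradicting the hypothesis --- or it vanishes, yielding a non-trivial $k$-linear relation among the leading coefficients $\beta_{i_k}$ for indices with $\lp(g_{i_k}) = v$. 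The decisive input from $(3)$ is that every $S$-polynomial-like combination $\beta_{i_b} g_{i_a} - \beta_{i_a} g_{i_b}$ with $\lp(g_{i_a}) = \lp(g_{i_b}) = v$ has remainder $0$: it appears as the result of one simple reduction of $g_{i_a}$ modulo $g_{i_b}$, whose remainder must coincide by $(3)$ with that of $g_{i_a} \xrightarrow{g_{i_a}} 0$. Using these identities to rewrite the representation, I will strip off the $g_{i_k}$'s of leading term $v$ one at a time, eventually producing a representation of $f$ with strictly smaller $v$ and contradicting minimality. This step is essentially the linear-algebraic shadow of Buchberger's $S$-polynomial criterion, and I expect it to be the main technical obstacle.
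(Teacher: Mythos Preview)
The paper does not actually prove this theorem: it is stated as a standard fact from Gr\"obner basis theory, with the reader referred to \cite{AdLous, CLS} for details. There is therefore no proof in the paper to compare against.

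Your proposed argument is the standard one and is essentially correct. The steps for $(1)\Rightarrow(2)$ and $(2)\Rightarrow(3)$ are fine as written. For $(3)\Rightarrow(1)$, your Buchberger-style strategy (choose a representation $f=\sum_k c_k g_{i_k}$ minimizing $v=\max_k \lp(g_{i_k})$, then either $\lp(f)=v$ gives an immediate contradiction, or the leading coefficients cancel and the $S$-pairs let you rewrite with strictly smaller $v$) goes through; the point that $(3)$ forces each $g_{i_a}-\tfrac{\beta_{i_a}}{\beta_{i_b}}g_{i_b}$ to have remainder $0$, hence to admit a representation with all leading basis elements strictly below $v$, is exactly what is needed. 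One small simplification: your preliminary step of reducing $f$ so that it is itself $G$-reduced is harmless but not used anywhere in the subsequent argument.

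You are also right to flag the hypothesis $\langle G\rangle = W$, but you should treat it as a genuine correction to the statement rather than a side assumption ``forced by $(2)$'': that justification is circular when proving $(3)\Rightarrow(1)$, and without the hypothesis the implication is simply false (take $V=k^2$ with $v_1>v_2$, $W=V$, $G=\{v_1\}$: every $f$ has the unique remainder given by its $v_2$-component, so $(3)$ holds, yet $(1)$ fails for $f=v_2$). The paper glosses over this point as well --- it even asserts after the theorem that ``it follows from item~3'' that $G$ generates $W$ --- so if you want a self-contained correct statement you should add $\langle G\rangle = W$ explicitly to the hypotheses, or equivalently build it into condition~$(3)$.
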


We say that $G$ is a {\bf Gröbner basis} for $W$  if it satisfies one of the equivalent conditions from the above theorem. It follows from item 3 above that if $G$ is a Gröbner basis for $W$, then the set $G$ generates $W$.

Once more we remark that the choice of a basis and the order of the elements in that basis are determinant: a set can be a Gröbner basis relatively to one such choice and not for another.

\begin{defi}
    If $G \subseteq V \setminus \{0\}$ is a finite set of non-zero vectors and $G$ is a Gröbner basis for $\langle G \rangle$, then we simply say that $G$ is a set that forms a \textbf{Gröbner basis}.
\end{defi}

\section{Mesh-canonical relations}
\label{sec:mesh-canonical rel}

We have seen that the mesh relations generate the mesh ideal, but we are interested in another set of generators, related to the first one, which is that of \textit{mesh-canonical relations}, which we now introduce.

Let $\Gamma$  be a translation quiver, and fix two vertices $x,y \in \Gamma_0$. Then $k\Ga(x,y)$ is the $k$-vector space whose basis are the paths from $x$ to $y$ over $\Ga$, and $k(\Ga)(x,y)$ is the quotient $k\Ga(x,y)/I(x,y)$, where $I(x,y)$ is a subspace generated by the mesh relations over $\Ga$. 

We say that an element $r \in I(x,y)$ is a \textbf{mesh-canonical relation} between $x$ and $y$ if $r$ has the form $r = \gamma_2 m_z \gamma_1$, where $z$ is a non-injective vertex of $\Ga$, $\gamma_1: x \rightsquigarrow z$ and $\gamma_2: \tau^{-1} z \rightsquigarrow y$ are paths over $\Gamma$. More explicitly, if $\alpha_1: z \rightarrow z_1, \ldots, \alpha_l: z \rightarrow z_l$ are all the arrows that start in $z$, then $m_z = \sum_{j = 1}^l (\sigma^{-1} \alpha_j)\alpha_j$ and

$$r = \gamma_2  \left(\sum_{j = 1}^l (\sigma^{-1} \alpha_j)\alpha_j\right) \gamma_1 = \sum_{j = 1}^l \gamma_ 2 (\sigma^{-1} \alpha_j)\alpha_j \gamma_1 $$

\begin{displaymath}
    \xymatrix{&&&z_1 \ar[dr]^{\sigma^{-1} \alpha_1}&&&\\
    x \ar@{~>}[rr]^{\gamma_1}&& z \ar[ur]^{\alpha_1} \ar[dr]_{\alpha_l}& \vdots & \tau^{-1} z \ar@{~>}[rr]^{\gamma_2}&& y\\
    &&&z_l \ar[ur]_{\sigma^{-1} \alpha_l}&&&}
\end{displaymath}

We shall denote by $\Rr(x,y)$ the set of mesh-canonical relations between $x$ and $y$. That way, the space $I(x,y)$ above is the subspace of $k\Ga(x,y)$ generated by $\Rr(x,y)$.

\subsection{Derived mesh-canonical relations}

Now we consider mesh-canonical relations which can be described in terms of those having smaller lengths. 

\begin{defi}

\begin{itemize}

\item If we have a path $\gamma:x \rightsquigarrow z$ and a mesh-canonical relation $r \in \Rr(z,y)$, then $r\gamma$ is a mesh-canonical relation between $x$ and $y$. Then we say that $r\gamma \in \Rr(x,y)$ is a \textbf{mesh-canonical relation derived} from the relation $r \in \Rr(z,y)$. Moreover, we shall also use the notation $\Rr'(z,y) \subseteq \Rr(x,y)$ to denote the subset of $\Rr(x,y)$ given by the mesh-canonical relations between $x$ and $y$ derived from some mesh-canonical relation between $z$ and $y$.

\item If $x$ is a non-injective vertex and $\gamma$ is a path $\tau^{-1} x \rightsquigarrow y$, then $r = \gamma m_x$ is a mesh-canonical relation, originated from the relation of length 2 given by the mesh starting at $x$. We shall denote by $\mathcal{R}_x \subseteq \Rr(x,y)$ the set of all mesh-canonical relations having this form, and the elements of $\mathcal{R}_x$ will be the \textbf{mesh-canonical relations derived from the mesh of $x$}.

\item If $x$ is not injective and $r \in \Rr(\tau^{-1} x, y)$ is a mesh-canonical relation between $\tau^{-1} x$ and $y$, then, for every arrow $\alpha: x \rightarrow x'$ starting at $x$, there is a mesh-canonical relation between $x$ and $y$ given by $r_{\alpha} \doteq r (\sigma^{-1} \alpha)\alpha$. The set $\Rr^d(\tau^{-1} x, y) = \{r_{\alpha}: r \in \Rr(\tau^{-1} x, y), \alpha \in \Delta_1, s(\alpha) = x\}$ is called the set of \textbf{mesh-canonical relations derived from the comesh} of $x$.

\end{itemize}

\begin{ex}
\label{ex:derived mesh-c relations}

Consider the following translation quiver $\Delta$:

\begin{displaymath}
        \xymatrix{& a \ar[dr]^{\beta_1} && a' \ar[dr]^{\delta_1} & \\
        x \ar[ur]^{\alpha_1} \ar[dr]^{\alpha_2}  \ar[ddr]_{\alpha_3}  \ar@{.}[rr]&& \tau^{-1} x \ar[ur]^{\gamma_1} \ar[dr]_{\gamma_2} \ar@{.}[rr] && y = \tau^{-2} x\\
        & b \ar[ur]^{\beta_2} && b' \ar[ur]_{\delta_2} & \\
        & c \ar[uur]_{\beta_3} &&&}
    \end{displaymath}

The vertices $a,a',b,b',c$ are all projective and injective. 

We have that $r \doteq (\delta_1\gamma_1+\delta_2\gamma_2)\beta_1 \in \Rr(a,y)$ is a mesh-canonical relation between $a$ and $y$. Accordingly, $r' \doteq r \alpha_1 = (\delta_1\gamma_1+\delta_2\gamma_2)\beta_1\alpha_1$ is a mesh-canonical relation between $x$ and $y$, which is derived from $r$. Also, $r'$ is derived from the comesh of $x$. 

Meanwhile, $r'' \doteq \gamma_2(\beta_1\alpha_1 + \beta_2 \alpha_2 + \beta_3\alpha_3)\in \Rr(x,b')$, for instance, is a mesh-canonical relation derived from the mesh of $x$.

\end{ex}

\end{defi}

\section{Gröbner bases for the mesh relations}
\label{sec:grobner bases for mesh}

We are interested to obtain Gröbner bases to the spaces that define the mesh ideal. It occurs that the mesh-canonical relations introduced in the last section will form such a basis (with the addition of a hypothesis) relatively to an order which we call \textit{mesh-lexicographic}.

\subsection{Mesh-lexicographic orders}

\begin{defi}
    Given a path $\gamma: w \rightsquigarrow z$, we say that $\gamma$ \textbf{starts with a path} $\delta:w \rightsquigarrow v$ if there is a path $\gamma':v \rightsquigarrow z$ such that $\gamma = \gamma' \delta$.
\end{defi}

Let $\Delta$ be a translation quiver, and fix a pair of vertices $x,y \in \Delta_0$. 

For each path $\gamma: x \rightsquigarrow z$ with $z$ predecessor of $y$, fix a total order $<_{\gamma}$ in the set of arrows that start at $z$ and finish at some predecessor of $y$.

Then, if $\delta,\delta':x \rightsquigarrow y$ are two distinct paths between $x$ and $y$, there will be $\gamma: x \rightsquigarrow z$ a path and $\alpha:z \rightarrow w$, $\alpha':z \rightarrow w'$ arrows such that $w$ and $w'$ are predecessors of $y$, $\delta$ starts with $\alpha \gamma$ and $\delta'$ starts with $\alpha' \gamma$. Since $\gamma, \alpha, \alpha'$ are uniquely determined by these conditions, the following relation is well-defined:

$$\delta < \delta' \Leftrightarrow \alpha <_{\gamma} \alpha'$$

By the way, this defines a total order $<$ over paths between $x$ and $y$.  We say that such an order is \textbf{lexicographic}.

\begin{defi}
    We say that a total order $<$ on the set of paths between $x$ and $y$ is \textbf{mesh-lexicographic} if it satisfies the following conditions:

    \begin{enumerate}
        \item $<$ is \textit{lexicographic} in the sense explained above.
        
        \item $<$ satisfies the following compatibility condition with the \textit{meshes} of $\Delta$: suppose there is a path $\gamma: x \rightsquigarrow z$ such that $\tau^{-1} z$ exists and is a predecessor of $y$, and let $\alpha: z \rightarrow z'$ be the arrow such that $z'$ is a predecessor of $y$, and such that for every arrow $\alpha': z \rightarrow w$ with $w$ predecessor of $y$ we have $\alpha \geq_{\gamma} \alpha'$. Then for every arrow $\beta: z' \rightarrow v$ such that $v$ is a predecessor of $y$, we have that $\sigma^{-1} \alpha \leq_{\alpha \gamma} \beta$.

        \begin{displaymath}
            \xymatrix{
            &&&& v \ar@{~>}[ddrr] && \\
            &&& z' \ar[ur]^{\beta} \ar[dr]^{\sigma^{-1} \alpha} &&& \\
            x \ar@{~>}[rr]^{\gamma} && z \ar[ur]^{\alpha} \ar[dr]_{\alpha'} \ar@{.}[rr] && \tau^{-1} z \ar@{~>}[rr] && y \\
            &&& w \ar[ur] &&&}
        \end{displaymath}

        (That means, roughly speaking, that if $\alpha$ is the greatest arrow that starts at $z$, $\sigma^{-1} \alpha$ is the smallest arrow that starts at $z'$).

    \end{enumerate}
\end{defi}

\begin{obs}
    If $<$ is a lexicographic order over the paths between $x$ and $y$, then $<$ induces a total order in the set of arrows that start at $x$ and end at some predecessor of $y$. In fact, that order is $<_{\epsilon_x}$, where $\epsilon_x$ is the zero-length path over the vertex $x$. In the following we will assume that this order is the one we have fixed for the arrows starting at $x$.
\end{obs}

\subsection{On the existence of mesh-lexicographic orders}

\begin{prop}
\label{prop:existe ordem mesh-lex}
    Let $\Delta$ be a translation quiver, and $x,y \in \Delta_0$ be such that all paths between $x$ and $y$ are closed. Let $\beta_1,\ldots,\beta_n$ be all the arrows starting at $x$ and ending at some predecessor of $y$, and suppose we have fixed an order $\beta_1 \preceq \ldots \preceq \beta_n$. Then there is an order $<$ on the paths between $x$ and $y$ satisfying the following properties:

    \begin{enumerate}
        \item $<$ is mesh-lexicographic.

        \item If $\gamma = \alpha_n \ldots \alpha_1: x \rightsquigarrow z$ and $\gamma' = \alpha_n' \ldots \alpha_1': x \rightsquigarrow z$ are two paths such that $z$ is a predecessor of $y$ and for every $1 \leq i \leq n$, the  arrows $\alpha_i$ and $\alpha'_i$ share the same start and end vertices, then $<_{\gamma} = <_{\gamma'}$.

        \item $\beta_1 <_{\epsilon_x} \ldots <_{\epsilon_x} \beta_n$ (i.e., $\preceq$ and $<_{\epsilon_x}$ coincide).
        
    \end{enumerate}
\end{prop}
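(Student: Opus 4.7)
My plan is to define the orders $<_\gamma$ by induction on the length of $\gamma$, letting them depend only on the vertex sequence $p(\gamma) = (v_0, v_1, \ldots, v_k)$ of $\gamma$. Parameterizing by vertex sequence makes condition (2) automatic, and setting $<_{\epsilon_x} := \preceq$ at the base handles condition (3); all the work sits in the inductive step, where the mesh-compatibility clause (1b) must be enforced.

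In more detail, for each vertex sequence $p = (v_0 = x, v_1, \ldots, v_k)$ with $v_k$ a predecessor of $y$, let $A_p$ denote the set of arrows starting at $v_k$ whose target is a predecessor of $y$. I would inductively define a total order $<_p$ on $A_p$ and then set $<_\gamma := <_{p(\gamma)}$. At length $0$, take $<_{(x)} := \preceq$. For a vertex sequence $p = (v_0, \ldots, v_{k+1})$ of length $k+2$ with truncation $q$, I would check whether (i) $v_k$ is non-injective, (ii) $\tau^{-1} v_k$ is a predecessor of $y$, and (iii) the $<_q$-maximum of $A_q$ is some arrow $\alpha : v_k \to v_{k+1}$. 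When all three conditions hold, $\sigma^{-1}\alpha$ is an arrow $v_{k+1} \to \tau^{-1} v_k$ that belongs to $A_p$, and I would choose $<_p$ to be any total order on $A_p$ having $\sigma^{-1}\alpha$ as smallest element; otherwise I let $<_p$ be any total order on $A_p$. This choice directly enforces clause (1b), while clause (1a) is obtained by comparing two distinct paths at their first disagreeing arrow using $<_p$, with $p$ the vertex sequence of their common prefix.

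The hardest point is verifying that the constraints imposed by (1b) never conflict. Because each vertex sequence has a unique truncation, at most one minimality constraint is imposed on each $<_p$, and it is satisfiable since $\tau^{-1} v_k$ being a predecessor of $y$ guarantees $\sigma^{-1}\alpha \in A_p$. The closed-path hypothesis enters precisely here: whenever the mesh constraint is triggered at level $p$ by a maximum $\alpha : v_k \to v_{k+1}$, the three vertices $v_k, v_{k+1}, \tau^{-1} v_k$ appear consecutively within some $x \rightsquigarrow y$ path and satisfy $v_k = \tau(\tau^{-1} v_k)$, so by hypothesis $v_k$ has at least two immediate successors in $\Delta$. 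This structural input is what controls the interplay between the recursive choice of maxima (which remains free) and the forced minima prescribed by the mesh condition, ensuring a consistent global assignment. With these checks in place, conditions (1), (2), and (3) follow directly from the construction.
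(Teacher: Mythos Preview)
Your proposal is correct and takes essentially the same approach as the paper's proof, which is only a brief sketch: define the local orders $<_\gamma$ inductively (you parameterize them by vertex sequence so that condition (2) is automatic), imposing the mesh-compatibility minimum at each step and taking condition (3) as the base case. Your treatment is in fact more detailed than the paper's, which merely says one should fix the $<_\gamma$ so that conditions 1--3 hold and remarks that the closed-path hypothesis is what keeps conditions 1 and 3 from conflicting.
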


\begin{proof}
    Since the argument is fairly simple, we only give the basic idea: for every path $\gamma: x \rightsquigarrow z$ where $z$ is a  predecessor of $y$, it is enough to fix an order $<_{\gamma}$ on the arrows that start at $z$ and end at some predecessor of $y$ in such a way that the conditions 1, 2 and 3 from the statement are satisfied. We remark that the hypothesis that all paths between $x$ and $y$ are closed is necessary to guarantee that condition 1 does not conflict with condition 3.
\end{proof}

\subsection{Mesh-canonical relations form Gröbner bases}

Our aim in this section is to state and prove the following theorem, which explains our decision of using Gröbner bases to study the mesh category:

\begin{teo}
\label{th:mesh canonicas eh grobner}
    Let $\Delta$ be a translation quiver and let $x,y \in \Delta_0$ be a pair of vertices such that all paths between $x$ and $y$ are closed. If the paths between $x$ and $y$ are ordered mesh-lexicographically, there is a subset $\Sc(x,y) \subseteq \Rr(x,y)$ satisfying:

    \begin{enumerate}
        \item for every pair of distinct relations $r,s \in \Sc(x,y)$, $\lp(r) \neq \lp(s)$;

        \item for every relation $r \in \Rr(x,y) \setminus \Sc(x,y)$, $r$ is a linear combination of the relations in $\Sc(x,y)$.
    \end{enumerate}
    
    In particular, if $\Rr(x,y)_m$ denotes the set of mesh-canonical relations between $x$ and $y$ with fixed length $m \in \mathbb{N}$, then $\Rr(x,y)_m$ is a Gröbner basis.
\end{teo}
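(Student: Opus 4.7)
My plan is to first identify the leading term of an arbitrary mesh-canonical relation under the mesh-lex order, use this to define $\Sc(x,y)$ satisfying condition~(1) by a leftmost-slot selection, and finally establish condition~(2) by induction driven by an explicit telescoping identity among mesh-canonical relations.

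For a mesh-canonical relation $r = \gamma_2 m_z \gamma_1 = \sum_{j=1}^{l} \gamma_2 (\sigma^{-1}\alpha_j)\alpha_j \gamma_1$, the mesh-lex order compares paths from the source $x$ and every summand shares the initial segment $\gamma_1$; therefore the largest summand is the one whose first differing arrow $\alpha_j$ is $<_{\gamma_1}$-maximal at $z$ among arrows to predecessors of $y$. Writing $\alpha_*$ for this maximal arrow, and observing that the mesh-compatibility axiom then forces $\sigma^{-1}\alpha_*$ to be minimal at $\tau^{-1}z$, we obtain $\lp(r) = \gamma_2 (\sigma^{-1}\alpha_*)\alpha_*\gamma_1$. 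Two distinct mesh-canonical relations share a common leading path $p$ exactly when $p$ contains several \emph{maximal mesh slots} --- pairs of consecutive arrows in $p$ consisting of a locally maximal arrow followed by its $\sigma^{-1}$-image. I would define $\Sc(x,y)$ to consist of those $r \in \Rr(x,y)$ whose mesh $z$ sits at the leftmost (closest to $x$) such slot in $\lp(r)$. Condition~(1) holds by construction.

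For condition~(2), the argument is by induction on $\lp(r)$ in the mesh-lex order, which is well-founded on the finite set of paths from $x$ to $y$. Let $r = \gamma_2 m_z \gamma_1 \in \Rr(x,y)\setminus \Sc(x,y)$; since $r \notin \Sc(x,y)$, the leading path $\lp(r)$ contains a maximal mesh slot at some vertex $z''$ strictly to the left of $z$, yielding a factorisation $\gamma_1 = \gamma_1^a (\sigma^{-1}\beta_*)\beta_* \gamma_1^b$ with $\beta_*$ maximal at $z''$. Taking $z''$ as the leftmost such slot, set
\[
r' = \gamma_2 (\sigma^{-1}\alpha_*)\alpha_* \gamma_1^a\, m_{z''}\, \gamma_1^b \in \Sc(x,y), \quad r_j = \gamma_2 (\sigma^{-1}\alpha_j)\alpha_j \gamma_1^a\, m_{z''}\, \gamma_1^b, \quad \tilde r_k = \gamma_2\, m_z\, \gamma_1^a (\sigma^{-1}\beta_k)\beta_k \gamma_1^b,
\]
with $j$ ranging over arrows at $z$ other than $\alpha_*$ and $k$ over arrows at $z''$ other than $\beta_*$. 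Then $\lp(r') = \lp(r)$ with matching leading coefficient, and a direct expansion of both sides as double sums over $(\alpha_j,\beta_k)$ yields the telescoping identity
\[
r - r' \;=\; \sum_{j \neq *} r_j \;-\; \sum_{k \neq *} \tilde r_k.
\]
Each summand on the right has leading path strictly preceding $\lp(r)$ (either $\alpha_*$ is replaced by a smaller $\alpha_j$ or $\beta_*$ by a smaller $\beta_k$); by the induction hypothesis each such summand lies in $\langle \Sc(x,y)\rangle$, hence so does $r-r'$, and therefore $r \in \langle \Sc(x,y)\rangle$.

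Once (1) and (2) are in hand, the equivalence theorem of Section~\ref{sec:some grobner} yields that $\Sc(x,y)$ is a Gröbner basis for $I(x,y)$; since the telescoping identity preserves path length, the same argument restricted to paths of length $m$ delivers the corresponding conclusion for $\Rr(x,y)_m$. The main obstacle I expect is the careful verification of the telescoping identity and the bookkeeping ensuring that the leftmost-slot prescription in the definition of $\Sc(x,y)$ is genuinely compatible with the inductive reduction step at every stage. The closed-paths hypothesis is used crucially via Proposition~\ref{prop:existe ordem mesh-lex} to guarantee that the mesh-lex orders employed throughout admit the compatibility properties needed for the leading-term computation to be consistent.
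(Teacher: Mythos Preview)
Your approach differs genuinely from the paper's: the paper proceeds by induction on the common path \emph{length} and isolates the interaction between $\Rr_x$ and the comesh-derived relations in a separate ``Bimesh Lemma'', whereas you induct on the leading path in the mesh-lex order and use a clean telescoping identity between two overlapping mesh-canonical relations. Your $\Sc(x,y)$ (leftmost maximal slot) is in general \emph{not} the same subset as the paper's, though both satisfy (1) and (2). The telescoping identity $r - r' = \sum_{j\neq *} r_j - \sum_{k\neq *}\tilde r_k$ is correct (both sides equal $\sum_{j,k} p_{j,k}$ minus the row $j=*$ and the column $k=*$), and the leading-path bounds for $r_j,\tilde r_k$ hold because the first divergence from $\lp(r)$ occurs at $\alpha_j <_{\gamma_1}\alpha_*$ or at $\beta_k <_{\gamma_1^b}\beta_*$.

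There is, however, a genuine gap. Your factorisation $\gamma_1 = \gamma_1^a(\sigma^{-1}\beta_*)\beta_*\gamma_1^b$ silently assumes that the leftmost maximal slot lies \emph{entirely inside} $\gamma_1$. But it can happen that this slot sits at position $|\gamma_1|$, so that its second arrow is $\alpha_*$ itself (the first arrow of the mesh at $z$); then $\gamma_1^a$ would need length $-1$ and the identity collapses. This is exactly where the closed-paths hypothesis enters, and it is \emph{not} via Proposition~\ref{prop:existe ordem mesh-lex} (the theorem already assumes a mesh-lex order is given; that proposition plays no role here). In the overlapping case one has $\alpha_* = \sigma^{-1}\beta_*$, and since $\beta_*$ is $<_{\gamma_1'}$-maximal the mesh-lex axiom forces $\sigma^{-1}\beta_*$ to be $<_{\gamma_1}$-minimal; as $\alpha_*$ is also $<_{\gamma_1}$-maximal, $z$ has a single outgoing arrow. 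But then $\lp(r)$ passes through $z, \tau^{-1}z'', \tau^{-1}z$ with $z = \tau(\tau^{-1}z)$ having a unique immediate successor, i.e.\ $\lp(r)$ is open, contradicting the hypothesis. Once you insert this short argument to exclude the overlap, your proof goes through; without it the counterexample after Theorem~\ref{th:mesh canonicas eh grobner} (where $r_3 = e_1$ has its leftmost slot overlapping the mesh of $a$) shows the telescoping step fails exactly as written.
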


\begin{obs}
    The theory of Gröbner bases also deals with the concepts of \textit{minimal} or \textit{reduced} Gröbner bases (see \cite{AdLous} for those definitions). In general, the Gröbner basis formed by the mesh-canonical relations, as in Theorem~\ref{th:mesh canonicas eh grobner}, is not minimal. The Bimesh Lemma below (Lemma~\ref{lem:bimesh 2}) will imply that some relations are superfluous, i.e., they can be deduced from the others. And even if we discard these superfluous relations in order to get a minimal Gröbner basis, we cannot expect it to be reduced, since usually we can reduce mesh-canonical relations through each other.
\end{obs}

Before proving Theorem~\ref{th:mesh canonicas eh grobner}, we need the following lemma. We have called it the `bimesh lemma' since it deals with interactions between the mesh-canonical relations derived from the mesh starting at $x$ and those derived from the comesh of $x$.

\begin{lem}[The Bimesh Lemma]
\label{lem:bimesh 2}

With the hypotheses of Theorem~\ref{th:mesh canonicas eh grobner}, let $\alpha_1: x \rightarrow x_1$ be the largest arrow of $\Delta$ starting at $x$ and ending at a predecessor of $y$. Define the following set, formed by mesh-canonical relations derived from the mesh:

$$\Sc \doteq \{r \in \Rr_x: \nexists s \in \Rr(\tau^{-1} x, y) \text{ such that }\lp(r) = \lp(s_{\alpha_1})\} \subseteq \Rr_x$$

For $r \in \Rr_x$, we have:

\begin{enumerate}
    \item $r$ is a linear combination of the elements from $\Rr^d(\tau^{-1} x, y) \cup \Sc$.

    \item if $r \in \Sc$, there is no $s \in \Rr(x,y)$ such that $r \neq s$ and $\lp(r) = \lp(s)$.
\end{enumerate}
    
\end{lem}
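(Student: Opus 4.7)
I will treat the two parts separately.

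For Part (1), I proceed by induction on the mesh-lex order of $\lp(r)$ for $r = \gamma m_x \in \Rr_x$. If $r \in \Sc$ there is nothing to do. Otherwise there exists $s \in \Rr(\tau^{-1}x, y)$ with $\lp(s_{\alpha_1}) = \lp(r) = \gamma(\sigma^{-1}\alpha_1)\alpha_1$; the compatibility of the mesh-lex order with right-multiplication by $(\sigma^{-1}\alpha_1)\alpha_1$ forces $\lp(s) = \gamma$. Writing $s = \gamma + \sum_i d_i p_i$ with every $p_i < \gamma$, one evaluates $s \cdot m_x$ in two ways:
\[
s \cdot m_x \;=\; \sum_j s\,(\sigma^{-1}\alpha_j)\alpha_j \;=\; \sum_j s_{\alpha_j}, \qquad s \cdot m_x \;=\; r + \sum_i d_i\, p_i m_x.
\]
Subtracting gives $r = \sum_j s_{\alpha_j} - \sum_i d_i\, p_i m_x$, where the first sum lies in the linear span of $\Rr^d(\tau^{-1}x, y)$, and each $p_i m_x \in \Rr_x$ has $\lp(p_i m_x) = p_i(\sigma^{-1}\alpha_1)\alpha_1 < \lp(r)$. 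The inductive hypothesis then expresses every $p_i m_x$ as a combination of $\Rr^d(\tau^{-1}x, y) \cup \Sc$, and so does $r$.

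For Part (2), suppose $r = \gamma m_x \in \Sc$ and $s = \delta_2 m_w \delta_1 \in \Rr(x, y)$ satisfies $\lp(s) = \lp(r)$ with $s \neq r$. The mesh at $w$ contributes a consecutive pair $(\alpha^*, \sigma^{-1}\alpha^*)$ of arrows to $\lp(s)$, and this pair must appear inside the path $\gamma(\sigma^{-1}\alpha_1)\alpha_1$. There are three possible positions:

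\emph{(i) Positions $1$--$2$:} then $w = x$, $\delta_1 = \epsilon_x$, $\delta_2 = \gamma$, so $s = r$, contradicting $s \neq r$.

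\emph{(ii) Positions $2$--$3$:} then $w = x_1$ and $\alpha^* = \sigma^{-1}\alpha_1$. For $\sigma^{-1}\alpha_1$ to be $<_{\alpha_1}$-maximal at $x_1$, the mesh-lex condition (using that $\alpha_1$ is $<_{\epsilon_x}$-maximal at $x$) forces it to simultaneously be $<_{\alpha_1}$-minimal at $x_1$, so $x_1$ must have a unique outgoing arrow to a predecessor of $y$. But the leading path then contains the mesh pattern $x_1 = \tau x_3$ with $x_3 = \tau^{-1}x_1$, and closedness forces $x_1$ to have at least two immediate successors, a contradiction.

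\emph{(iii) Positions $k$--$(k{+}1)$ with $k \geq 3$:} then $\delta_1 = \delta_1''(\sigma^{-1}\alpha_1)\alpha_1$ for some path $\delta_1''$ starting at $\tau^{-1}x$, and $s^{\dagger} := \delta_2 m_w \delta_1'' \in \Rr(\tau^{-1}x, y)$ satisfies $s^{\dagger}_{\alpha_1} = s$. Hence $\lp(s^{\dagger}_{\alpha_1}) = \lp(r)$, contradicting $r \in \Sc$.

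The main obstacle is case (ii) of Part (2): the mesh-lex order on its own does not rule out the configuration $w = x_1$, and one must additionally invoke closedness of the leading path to exclude the degenerate possibility that $x_1$ has a single outgoing arrow. This is precisely the juncture at which the standing hypothesis that all paths between $x$ and $y$ are closed becomes indispensable.
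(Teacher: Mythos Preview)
Your proof is correct. Part~(2) follows the same case analysis as the paper: your cases (i), (ii), (iii) correspond exactly to the paper's treatment of $s\in\Rr_x$, $\gamma$ of length $1$, and $\gamma$ of length $\geq 2$, respectively, and the use of closedness in case~(ii) to exclude a unique outgoing arrow at $x_1$ is precisely the paper's argument. One minor point worth making explicit in (ii): you conclude that $x_1$ has a unique outgoing arrow \emph{to a predecessor of $y$}, whereas closedness concerns all immediate successors; the gap closes because $\tau^{-1}x_1$ exists and is a predecessor of $y$ (via $\delta_2$), so every immediate successor of $x_1$ is a predecessor of $y$ through the mesh at $x_1$.

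Part~(1), however, is organised differently from the paper. The paper first enumerates the paths $e_1,\ldots,e_N$ from $\tau^{-1}x$ to $y$, observes that $\{r_{\alpha_1}:r\in\Rr(\tau^{-1}x,y)\}\cup\{e_j(\sigma^{-1}\alpha_1)\alpha_1:j\notin I\}$ is a Gr\"obner basis for the span of all paths beginning with $(\sigma^{-1}\alpha_1)\alpha_1$, deduces a ``Fact'' expressing each $e_i$ as a combination of $\Rr(\tau^{-1}x,y)\cup\{e_j:j\notin I\}$, and finally right-multiplies this identity by $m_x$. Your approach instead runs a direct downward induction on $\lp(r)$, exploiting the single commutation identity $s\cdot m_x=\sum_\alpha s_\alpha$ to peel off one relation at a time. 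Your route is more self-contained and avoids the auxiliary Gr\"obner-basis observation; the paper's route makes the underlying combinatorial structure (the index set $I$ and the bijection $\rho_i\in\Sc\Leftrightarrow i\notin I$) more visible, which is convenient for the example that follows the lemma.
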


\begin{proof}
    Note that we can suppose, without loss of generality, that all paths between $x$ and $y$ have the same length. (Indeed, we can build a set $\Sc$ as in the statement for every subset of $\Rr_x$ formed by the relations with a fixed length and then take the union of all those sets).

    That way, using the fact that $\Delta$ is locally finite, there is a finite number of paths between $\tau^{-1} x$ and $y$. Thus let they be $e_1,\ldots,e_N$, listed in such a way that $e_1 (\sigma^{-1} \alpha_1) \alpha_1 < \ldots < e_N (\sigma^{-1} \alpha_1) \alpha_1$. With this notation:

    \begin{itemize}
        \item if $s \in \Rr(\tau^{-1} x , y)$, then there is $I(s) \subseteq \{1,\ldots, N\}$ such that $s = \sum_{i \in I(s)} e_i$, and thus $s_{\alpha_1} = \sum_{i \in I(s)} e_i (\sigma^{-1} \alpha_1) \alpha_1$. Then $\lp(s_{\alpha_1}) = e_{\max I(s)} (\sigma^{-1} \alpha_1) \alpha_1$.

        \item We can denote $\Rr_x = \{\rho_1,\ldots,\rho_N\}$, where for every $1 \leq i \leq N$, $\rho_i = e_i m_x = \sum_{s(\alpha) = x} e_i (\sigma^{-1} \alpha) \alpha$. Also note that $\lp(\rho_i) = e_i (\sigma^{-1} \alpha_1) \alpha_1$.
    \end{itemize}

    Let now $I = \{\max I(s): s \in \Rr(\tau^{-1} x, y)\}$. Then, by construction, for every $1 \leq i \leq N$, $\rho_i \in \Sc$ if and only if $i \notin I$.

    Let us now prove the following fact:

    \textit{Fact.} For every $1 \leq i \leq N$, $e_i$ is a linear combination of the elements of $\Rr(\tau^{-1} x, y) \cup \{e_j : j \notin I\}$.

    Note that, by construction,

    $$\{\lp(r_{\alpha_1}): r \in \Rr(\tau^{-1} x, y)\} \cup \{e_j (\sigma^{-1} \alpha_1) \alpha_1: j \notin I\} = \{e_j (\sigma^{-1} \alpha_1) \alpha_1: 1 \leq j \leq N\}$$

    That is, by definition, $\{r_{\alpha_1}: r \in \Rr(\tau^{-1} x, y)\} \cup \{e_j(\sigma^{-1}\alpha_1)\alpha_1: j \notin I\}$ is a Gröbner basis for the subspace generated by the paths $x \rightsquigarrow y$ which start with $(\sigma^{-1} \alpha_1) \alpha_1$. Therefore, if $1 \leq i \leq N$, there are $s_1,\ldots,s_m \in \Rr(\tau^{-1} x, y)$ and scalars $\lambda_1,\ldots,\lambda_m,\mu_j$ for $j \notin I$ such that
    
    \begin{align*}
    e_i (\sigma^{-1} \alpha_1) \alpha_1 &= \lambda_1 (s_1)_{\alpha_1} + \ldots + \lambda_m (s_m)_{\alpha_1} +  \sum_{j \notin I} \mu_j e_j (\sigma^{-1} \alpha_1) \alpha_1 \\
    &= \lambda_1 s_1  (\sigma^{-1} \alpha_1) \alpha_1 + \ldots + \lambda_m s_m  (\sigma^{-1} \alpha_1) \alpha_1 +  \sum_{j \notin I} \mu_j e_j (\sigma^{-1} \alpha_1) \alpha_1 \\
    &= \left( \lambda_1 s_1  + \ldots + \lambda_m s_m +  \sum_{j \notin I} \mu_j e_j\right) (\sigma^{-1} \alpha_1) \alpha_1
    \end{align*}

Thus $e_i = \lambda_1 s_1  + \ldots + \lambda_m s_m +  \sum_{j \notin I} \mu_j e_j$, proving the fact above.

We now show item 1 in statement: if $1 \leq i \leq N$, then the fact above implies the existence of scalars $\lambda_j ,\mu_j \in k$ and relations $u_1,\ldots,u_q \in \Rr(\tau^{-1} x, y)$ such that 

$$e_i = \lambda_1 u_1 + \ldots + \lambda_q u_q + \sum_{j \notin I} \mu_j e_j$$

so we make

\begin{align*}
   &\rho_i = e_i \left(\sum_{s(\alpha) = x}(\sigma^{-1} \alpha)\alpha \right) \\
   &= \sum_{s(\alpha) = x} \left( \lambda_1 u_1 (\sigma^{-1} \alpha)\alpha + \ldots +\lambda_q u_q (\sigma^{-1} \alpha)\alpha + \sum_{j \notin I} \mu_j e_j (\sigma^{-1} \alpha)\alpha \right)\\
   &= \lambda_1 \left(\sum_{s(\alpha) = x} u_1 (\sigma^{-1} \alpha)\alpha \right) + \ldots +  \lambda_q \left(\sum_{s(\alpha) = x} u_q (\sigma^{-1} \alpha)\alpha\right) + \sum_{j \notin I} \mu_j \rho_j
\end{align*}

Where we have, for every $j$ between 1 and $q$, that $\sum_{s(\alpha) = x} u_j (\sigma^{-1} \alpha)\alpha$ belongs to the set $\Rr^d(\tau^{-1} x, y)$, and each $\rho_j$ with $j \notin I$ belongs to $\Sc$, which shows item 1.

We proceed to the proof of item 2. Fix $r \in \Sc$, and suppose by absurd that there is a $s \in \Rr(x,y)$ different from $r$ and such that $\lp(r) = \lp(s)$. Since $r \in \Sc \subseteq \Rr_x$, there is $i$ between 1 and $N$ such that $r = \rho_i$. Since $\lp(\rho_j) = e_j (\sigma^{-1} \alpha_1) \alpha_1$, there is no way we have $\lp(\rho_i) = \lp(\rho_j)$ if $i \neq j$. That proves $s \notin \Rr_x$. So there are a non-injective vertex $z$ and paths $\gamma: x \rightsquigarrow z$ and $\delta: \tau^{-1} z \rightsquigarrow y$ such that $s = \delta m_z \gamma$ and $\gamma$ has length greater than or equal to 1.

Let $\beta_1: z \rightarrow z_1, \ldots, \beta_t: z \rightarrow z_t$ be the arrows that start at $z$, listed in such a way that $\delta (\sigma^{-1} \beta_1) \beta_1 \gamma > \ldots > \delta (\sigma^{-1} \beta_t) \beta_t \gamma$. In particular $\lp(s) = \lp\left(\delta \left(\sum_{j=1}^t (\sigma^{-1} \beta_j) \beta_j \right)\gamma\right) = \delta (\sigma^{-1} \beta_1) \beta_1 \gamma$.

We have two cases to analyze:

\begin{itemize}
    \item If $\gamma$ has length 1, then $\gamma$ is just a single arrow $\alpha: x \rightarrow z$. If $\alpha \neq \alpha_1$, then $\alpha < \alpha_1$, because $\alpha_1$ is by definition the largest arrow that starts at $x$, and so $\delta (\sigma^{-1} \beta_1) \beta_1 \alpha < e_i (\sigma^{-1} \alpha_1) \alpha_1$, that is, $\lp(s) < \lp(r)$, contradiction. Then we must have $\alpha = \alpha_1$. Since $<$ is mesh-lexicographic and $\alpha_1$ is the largest arrow that starts at $x$, the paths that start with $(\sigma^{-1} \alpha_1) \alpha_1$ must be smaller than those which start with $\alpha_1$ but not with $(\sigma^{-1} \alpha_1) \alpha_1$. But then the only possibility for us to have $\lp(s) = \delta (\sigma^{-1} \beta_1) \beta_1 \alpha_1 = e_i (\sigma^{-1} \alpha_1) \alpha_1$ is if $\sigma^{-1} \alpha_1 = \beta_1$ were the only arrow starting at $z$, and from that we would have $x \xrightarrow{\alpha_1} z \xrightarrow{\beta_1} z_1 \xrightarrow{\sigma^{-1}\beta_1} \tau^{-1} z \rightsquigarrow_{\delta} y$ being a open path between $x$ and $y$, going against the hypothesis. So we can move on to the next case.

    \item Suppose $\gamma$ has length at least 2, and therefore that it starts with $\beta \alpha$, where $\alpha:x \rightarrow u$ and $\beta:u \rightarrow v$ are arrows. Since $\lp(s) = \delta (\sigma^{-1} \beta_1) \beta_1 \gamma= e_i (\sigma^{-1} \alpha_1) \alpha_1 = \lp(r)$, we already obtain that $\alpha = \alpha_1$ and $\beta = \sigma^{-1} \alpha_1$. But that means that $s \in \Rr^d(\tau^{-1} x, y)$, and that $s$ has the form $s=s'_{\alpha_1}$, where $s' \in \Rr(\tau^{-1} x, y)$. Since $r \in \Sc$, the fact that we have $\lp(r) = \lp(s)$ therefore contradicts the way we have defined $\Sc$, finishing this case and the proof.
\end{itemize}
\end{proof}

\begin{ex}
    We go back to the Example~\ref{ex:derived mesh-c relations}, in order to illustrate the Bimesh Lemma above. 

    \begin{displaymath}
        \xymatrix{& a \ar[dr]^{\beta_1} && a' \ar[dr]^{\delta_1} & \\
        x \ar[ur]^{\alpha_1} \ar[dr]^{\alpha_2}  \ar[ddr]_{\alpha_3}  \ar@{.}[rr]&& \tau^{-1} x \ar[ur]^{\gamma_1} \ar[dr]_{\gamma_2} \ar@{.}[rr] && y = \tau^{-2} x\\
        & b \ar[ur]^{\beta_2} && b' \ar[ur]_{\delta_2} & \\
        & c \ar[uur]_{\beta_3} &&&}
    \end{displaymath}

Here we have 6 paths from $x$ to $y$, which we denote \begin{align*}
    e_1 = \delta_1 \gamma_1 \beta_1 \alpha_1, & \hspace{1cm} e_3 = \delta_1 \gamma_1 \beta_2 \alpha_2,  \hspace{1cm} e_5 = \delta_1 \gamma_1 \beta_3 \alpha_3, \\
    e_2 = \delta_2 \gamma_2 \beta_1 \alpha_1, & \hspace{1cm} e_4 = \delta_2 \gamma_2 \beta_2 \alpha_2, \hspace{1cm} e_6 = \delta_2 \gamma_2 \beta_3 \alpha_3
\end{align*}

And note that we have named them in such a way that the natural order $e_1 > e_2 > e_3 > e_4 > e_5 > e_6$ is mesh-lexicographic. We have 5 mesh-canonical relations between $x$ and $y$, which we denote by

\begin{align*}
    r_1 = e_1 + e_2 & \hspace{1cm} r_4 = e_1 + e_3 + e_5\\
    r_2 = e_3 + e_4 & \hspace{1cm} r_5 = e_2+ e_4 + e_6\\
    r_3 = e_5 + e_6 & 
\end{align*}

The mesh-canonical relations derived from the mesh of $x$ are given by $\Rr_x = \{r_4, r_5\}$, while the relations derived from the comesh of $x$ form the set $\Rr^d(\tau^{-1}x, y) = \{r_1,r_2,r_3\}$.

As it comes to the leading basis elements, we have $\lp(r_1) = e_1, \lp(r_2) = e_3, \lp(r_3) = e_5$, while $\lp(r_4) = e_1$ (thus the same as $r_1$), and $\lp(r_5) = e_2$. Therefore, by the definition of $\Sc$ above, we have $\Sc = \{r_5\}$, when the Bimesh Lemma tells us that the none of $r_1,r_2,r_3,r_4$ have the same leading basis element as $r_5$ (which we have just shown again), and that every element in $\Rr_x = \{r_4,r_5\}$ is a linear combination of $\Rr^d(\tau^{-1} x, y) \cup \Sc = \{r_1,r_2,r_3,r_5\}$.  In fact, $r_5 \in \Sc$ is a linear combination of itself, and $r_4$ is a linear combination of $r_1,r_2,r_3,r_5$, as follows:

\begin{align*}
    r_4 &= e_1 + e_3 + e_5 = (e_1+e_2)-e_2 + (e_3+e_4)-e_4 + (e_5+e_6)-e_6 \\
    &= (e_1+e_2) + (e_3+e_4) + (e_5+e_6) - (e_2+e_4+e_6) = r_1 + r_2 + r_3 - r_5
\end{align*}

\end{ex}

\begin{proof}[Proof of Theorem~\ref{th:mesh canonicas eh grobner}]
Note that the second assertion in the statement of this theorem, which says that items 1 and 2 in particular imply that $\Rr(x,y)_m$ is a Gröbner basis, follows by an easy argument from the definition of Gröbner bases. So our work is to show items 1 and 2.

Once more, note that we can suppose without loss of generality that the paths $x \rightsquigarrow y$ all have the same length $m$. As so, we can prove the theorem by induction on the length $m$ of these paths.

If that length is 1, then it is trivial, because the mesh-canonical relations only involve paths of length at least 2, which implies $\Rr(x,y) = \emptyset$, and thus the statement is easily verified in this case. Let us see the case where the length is greater than 1.

Let $\alpha_1: x \rightarrow x_1, \ldots, \alpha_t: x \rightarrow x_t$ be the arrows that start at $x$ and end at some predecessor of $y$, and suppose $\alpha_1 > \ldots > \alpha_t$. Note that we have a disjoint union:

    $$\Rr(x,y) = \Rr_x \sqcup \Rr'(x_1,y) \sqcup \ldots \sqcup \Rr'(x_t,y)$$

where $\Rr_x$ is the set of relations derived from the mesh that starts at $x$ (where it could be that $\Rr_x = \emptyset$), and for each $i$, $\Rr'(x_i,y)$ denotes the set of mesh-canonical relations derived from the ones between $x_i$ and $y$.

Now, we can assume, using the induction hypothesis, that for each $1 \leq i \leq t$, there is a subset $\Sc(x_i,y) \subseteq \Rr'(x_i,y)$ such that:

    \begin{enumerate}
        \item[(1a)] $\lp(r) \neq \lp(s)$ for every pair $r,s \in \Sc(x_i,y)$ with $r \neq s$;

        \item[(2a)] for every relation $r \in \Rr'(x_i,y) \setminus \Sc(x_i,y)$, $r$ is a linear combination of the elements from $\Sc(x_i,y)$.
    \end{enumerate}

On the other hand, by the Bimesh Lemma, there is a subset $\Sc \subseteq \Rr_x$ such that:

\begin{enumerate}
    \item[(1b)] If $r \in \Sc$, there is no $s \in \Rr'(x_1,y) \cup \ldots \cup \Rr'(x_t,y)$ such that $\lp(r) = \lp(s)$;
    
    \item[(2b)] If $r \in \Rr_x \setminus \Sc$, $r$ is a linear combination of the elements from $\Sc \cup \Rr'(x_1,y) \cup \ldots \cup \Rr'(x_t,y)$.
\end{enumerate}

Define then $\Sc(x,y) \doteq \Sc \cup \Sc(x_1,y) \cup \ldots \cup \Sc(x_t,y)$. Let us show that $\Sc(x,y)$ verifies items 1 and 2 from the statement:

\begin{enumerate}
    \item Fix two relations $r,s \in \Sc(x,y)$, with $r \neq s$. Let us see that $\lp(r) \neq \lp(s)$, which proves item 1 in the statement. We have four cases:

    \begin{itemize}
        \item $r,s \in \Sc \subseteq \Rr_x$:  then $\Rr_x \neq \emptyset$, and by the definition of $\Rr_x$, there are distinct paths $\gamma,\delta: \tau^{-1} x \rightsquigarrow y$ such that $r = \gamma m_x$ and $s = \delta m_x$. Then, since $\alpha_1$ is the largest of the arrows that start at $x$ and the order $<$ is lexicographic, we have $\lp(r) = \gamma (\sigma^{-1} \alpha_1) \alpha_1 \neq \delta (\sigma^{-1} \alpha_1) \alpha_1 = \lp(s)$.

        \item There is an $i$ such that $r,s \in \Sc(x_i,y)$: then we use use (1a) to obtain that $\lp(r) \neq \lp(s)$.

        \item There are $i,j$ with $i \neq j$ such that $r \in \Sc(x_i,y)$ and $s \in \Sc(x_j,y)$: in this case $\lp(r)$ is a path that starts with $\alpha_i$ and $\lp(s)$ is a path that starts with $\alpha_j$. In particular, $\lp(r) \neq \lp(s)$.

        \item $r \in \Sc$ and $s \in \Sc(x_i,y)$ for some $1 \leq i \leq t$ (or the contrary): it suffices to use (1b).
    \end{itemize}
    \item If $r \in \Rr(x,y) \setminus \Sc(x,y)$, then we have two possibilities: first, suppose $r \in \Rr_x \setminus \Sc$. Then, by (2b), $r$ is a linear combination of $\Sc \cup \Rr'(x_1,y) \cup \ldots \cup \Rr'(x_t,y)$, with each $\Rr'(x_i,y)$ being generated, using (2a), by $S(x_i,y)$. Then $r$ is a linear combination of $\Sc \cup \Sc(x_1,y) \cup \ldots \cup \Sc(x_t,y) = \Sc(x,y)$. The second possibility is to have $r \in \Rr'(x_i,y) \setminus \Sc(x_i,y)$. But then, by (2a), $r$ is a linear combination of $\Sc(x_i,y) \subseteq \Sc(x,y)$. That proves item 2.
\end{enumerate}
\end{proof}

\begin{ex}
    We shall give an example to show that the hypothesis of all paths between $x$ and $y$ being closed is necessary for the Theorem~\ref{th:mesh canonicas eh grobner} to hold. Let $\Delta$ be the following translation quiver:

    \begin{displaymath}
        \xymatrix{& a \ar[dr] \ar@{.}[rr]&& \tau^{-1} a \ar[dr]& \\
        x \ar[ur] \ar[dr] \ar@{.}[rr]&& \tau^{-1} x \ar[ur] \ar[dr] \ar@{.}[rr] && y = \tau^{-2} x\\
        & b \ar[ur]&& c \ar[ur]&}
    \end{displaymath}

The vertices $x$ and $a$ are projective, the vertices $\tau^{-1} a$ and $y$ are injective, while the vertices $b$ and $c$ are both projective and injective.

We have 4 paths between $x$ and $y$, which we enumerate in the following way:

\begin{align*}
    e_1: & \text{ the path which passes through } x,a,\tau^{-1} x, \tau^{-1} a, y; \\
    e_2: & \text{ the path which passes through } x,a,\tau^{-1} x, c, y; \\
    e_3: & \text{ the path which passes through } x,b,\tau^{-1} x, \tau^{-1} a, y; \text{ and }\\
    e_4: & \text{ the path which passes through } x,b,\tau^{-1} x, c, y
\end{align*}

The paths between $x$ and $y$ are not all closed, because $e_1$ is an open path.

Over these paths, we fix the following order, which is mesh-lexicographic: $e_2 > e_1 > e_3 > e_4$. We have 5 mesh-canonical relations, written below with descending order from terms to the left to the right:

\begin{align*}
    r_1 = e_2 + e_1 & \hspace{2cm} r_3 = e_1 & r_4 = e_2 + e_4 \\
    r_2 = e_3 + e_4 && r_5 = e_1 + e_3
\end{align*}

Then observe that $\Rr(x,y) = \{r_1,r_2,r_3,r_4,r_5\}$ is not a Gröbner basis: for instance, $e_4 = (e_3+e_4)-(e_1+e_3)+e_1 = r_2 - r_5 + r_3$ belongs to the subspace generated by $\Rr(x,y)$, but there is no relation $r_i$ in $\Rr(x,y)$ whose leading term $\lp(r_i)$ is $e_4$. 
 
\end{ex}

\section{Zero paths in the mesh category}
\label{sec:zero paths}

It is time to use the Gröbner basis given by Theorem~\ref{th:mesh canonicas eh grobner} to derive the following result, which asserts the non-existence of zero combinations of paths in the mesh category between two vertices if the paths between these vertices are all closed. This will be a key step towards proving our main theorem.

\begin{teo}
\label{th:his neq 0}
    Let $\Delta$ be a translation quiver, and let $x,y \in \Delta_0$ be such that all the paths between $x$ and $y$ are pairwise homotopic and closed. Suppose there is a path $x=x_0 \rightarrow x_1 \rightarrow \ldots \rightarrow x_n = y$ in $\Delta$. Let, for all $1 \leq i \leq n$, $\alpha_{i1},\ldots,\alpha_{id_i}$ be all the arrows between $x_{i-1}$ and $x_i$ in $\Delta$, and let $a_{ij} \in k$, $1 \leq i \leq n, 1 \leq j \leq d_i$ be scalars in $k$ such that for every $i$ there is a $j$ such that $a_{ij} \neq 0$. Define, for every $1 \leq i \leq n$, $h_i = a_{i1} \alpha_{i1} + \ldots + a_{id_i} \alpha_{id_i} \in k\Delta(x_{i-1},x_i)$. Then $\overline{h_n} \ldots \overline{h_1} \neq 0$ in $k(\Delta)$.
\end{teo}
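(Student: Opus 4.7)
The plan is to apply the Gröbner basis machinery of Theorem~\ref{th:mesh canonicas eh grobner}. To show $\overline{h_n} \cdots \overline{h_1} \neq 0$ in $k(\Delta)$, it suffices to show $h_n \cdots h_1 \notin I(x,y)$, where $I(x,y)$ is the subspace of $k\Delta(x,y)$ generated by mesh-canonical relations between $x$ and $y$. Since $I(x,y)$ is homogeneous with respect to length and $h_n \cdots h_1$ is concentrated in length $n$, it is enough to show $h_n \cdots h_1 \notin I(x,y)_n = \langle \Rr(x,y)_n\rangle$. The strategy is to fix, for each $i$, an arrow $\alpha_i^* = \alpha_{i,j_i^*}$ with $a_{i,j_i^*} \neq 0$, and then design a mesh-lexicographic order on the paths $x \rightsquigarrow y$ under which the distinguished path $\alpha_n^* \cdots \alpha_1^*$ coincides with $\lp(h_n \cdots h_1)$ but with $\lp(r)$ for no mesh-canonical relation $r \in \Rr(x,y)_n$. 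The defining property of a Gröbner basis will then force $h_n \cdots h_1 \notin I(x,y)_n$.

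Adapting the construction in the proof of Proposition~\ref{prop:existe ordem mesh-lex}, I build a mesh-lex order $<$ satisfying two requirements. Requirement (a): for each $1 \leq i \leq n$ and each path $\gamma$ of vertex sequence $(x_0, \ldots, x_{i-1})$, $\alpha_i^*$ is the maximum under $<_\gamma$ among arrows $x_{i-1} \to x_i$ with $a_{i,\cdot} \neq 0$. This guarantees $\lp(h_n \cdots h_1) = \alpha_n^* \cdots \alpha_1^*$, by a greedy lex-comparison of paths in the support of $h_n \cdots h_1$, all of which share the vertex sequence $(x_0, \ldots, x_n)$. Requirement (b): for each $0 \leq i \leq n-2$ with $x_i$ non-injective and $\tau^{-1} x_i = x_{i+2}$, the maximum arrow out of $x_i$ (among arrows to predecessors of $y$) under $<_\gamma$ for $\gamma$ of vertex sequence $(x_0, \ldots, x_i)$ is \emph{not} $\alpha_{i+1}^*$, but instead some arrow $x_i \to w$ with $w \neq x_{i+1}$. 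The closed-path hypothesis is exactly what makes (b) possible: when $x_i = \tau x_{i+2}$, $x_i$ has at least two immediate successors, and via the $\sigma$-bijection each such successor $w$ admits an arrow $w \to \tau^{-1} x_i = x_{i+2}$, so $w$ is automatically a predecessor of $y$. Requirements (a) and (b) are compatible because (a) only constrains the relative ordering of arrows ending at $x_i$, whereas (b) places a different arrow as the global maximum out of $x_i$; the cascading mesh-compatibility clause then affects only orders at the off-path vertices $w$, so the constructions at different indices do not interfere.

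For the contradiction, suppose $h_n \cdots h_1 \in I(x,y)_n$. By Theorem~\ref{th:mesh canonicas eh grobner}, $\Rr(x,y)_n$ is a Gröbner basis for $I(x,y)_n$, so there exists $r = \delta\, m_z\, \gamma \in \Rr(x,y)_n$ with $\lp(r) = \alpha_n^* \cdots \alpha_1^*$. Expanding, $\lp(r) = \delta (\sigma^{-1}\alpha^*)\alpha^*\gamma$ where $\alpha^*$ is the maximum arrow out of $z$ under $<_\gamma$; matching with $\alpha_n^* \cdots \alpha_1^*$ forces $z = x_i$ for some $i$ with $|\gamma| = i$, $\tau^{-1} z = x_{i+2}$, and $\gamma$, $\delta$ to have vertex sequences $(x_0,\ldots,x_i)$ and $(x_{i+2},\ldots,x_n)$ respectively. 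By item~2 of Proposition~\ref{prop:existe ordem mesh-lex}, $<_\gamma$ depends only on its vertex sequence and so coincides with $<_{\alpha_i^* \cdots \alpha_1^*}$; requirement (b) then gives $\alpha^* \neq \alpha_{i+1}^*$, so the $(i+1)$-th arrow of $\lp(r)$ differs from $\alpha_{i+1}^*$, contradicting $\lp(r) = \alpha_n^* \cdots \alpha_1^*$.

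The main obstacle is the explicit construction of the mesh-lex order in the second paragraph: one must verify carefully that requirements (a) and (b) can be jointly enforced together with the mesh-compatibility clause in the definition of mesh-lex orders, especially when several indices $i$ simultaneously satisfy $x_i = \tau x_{i+2}$. The key observation, which should be made precise by revisiting the inductive construction behind Proposition~\ref{prop:existe ordem mesh-lex}, is that designating $x_i \to w$ (with $w \neq x_{i+1}$) as a local maximum cascades only into constraints on orders at $w$, and since $w$ lies off the path $x_0, x_1, \ldots, x_n$ these cascaded constraints never conflict with the analogous constructions at the other indices.
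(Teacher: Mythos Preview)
Your approach is genuinely different from the paper's and, once the order construction is nailed down, more direct. The paper argues by induction on $n$: it invokes Proposition~\ref{prop:existe ordem mesh-lex} only to control the order $<_{\epsilon_x}$ on arrows leaving $x$, reduces $h_n\cdots h_1$ first modulo $\Rr'(x_1,y)$ (the inductive hypothesis guarantees a nonzero remainder), and then treats the relations in $\Rr_x$ by a three-case analysis whose hardest branch ($\Rr_x\neq\emptyset$ and $x^+=\{x_1\}$) is settled by a determinant computation. Your argument instead tailors the full mesh-lexicographic order to the entire path $(x_0,\ldots,x_n)$ so that $\alpha_n^*\cdots\alpha_1^*$ is simultaneously $\lp(h_n\cdots h_1)$ and never the leading term of a mesh-canonical relation; this sidesteps both the induction and the linear algebra, at the cost of needing a finer version of Proposition~\ref{prop:existe ordem mesh-lex}.

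The gap is exactly where you flag it, but it is a bit wider than you indicate. Your requirement~(b) forces the global maximum out of $x_i$ to land at some $w\neq x_{i+1}$ only when $\tau^{-1}x_i=x_{i+2}$; yet the mesh-compatibility clause fires whenever $\tau^{-1}x_i$ exists and is a predecessor of $y$, regardless of whether it equals $x_{i+2}$. In that residual case, if you let the maximum out of $x_i$ end at $x_{i+1}$, the cascaded minimum constraint lands on the on-path vertex sequence $(x_0,\ldots,x_{i+1})$ and can collide with requirement~(b) at index $i{+}1$ (e.g.\ when the sole arrow $x_{i+1}\to w'$ with $w'\neq x_{i+2}$ is precisely the cascaded $\sigma^{-1}$-image). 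The fix is to strengthen~(b): whenever $\tau^{-1}x_i$ is a predecessor of $y$, send the maximum out of $x_i$ to some $w\neq x_{i+1}$. This is always possible, since a path $x\rightsquigarrow x_i\to x_{i+1}\to\tau^{-1}x_i\rightsquigarrow y$ would be open at $x_i$ were $x_i$ to have a unique successor, contradicting the hypothesis; hence $x_i$ has at least two successors. With this strengthening every cascade leaves the on-path vertex sequences and your inductive construction goes through cleanly. One minor imprecision: $w$ need not lie off the set $\{x_0,\ldots,x_n\}$ as a vertex; what matters, and what you actually use via item~2 of Proposition~\ref{prop:existe ordem mesh-lex}, is that the vertex \emph{sequence} $(x_0,\ldots,x_i,w)$ differs from $(x_0,\ldots,x_{i+1})$.
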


\begin{proof}

Since all paths between $x$ and $y$ are pairwise homotopic, they have in particular the same length. In order to show that $\overline{h_n} \ldots \overline{h_1} \neq 0$ in $k(\Delta)$, we have to show that $h_n \ldots h_1 \in k\Delta(x,y)$ cannot be reduced to zero via the mesh-canonical relations, because $k(\Delta)(x,y)$ is the quotient of the space $k\Delta(x,y)$ by the subspace generated by the mesh-canonical relations between $x$ and $y$, which form a Gröbner basis according to Theorem~\ref{th:mesh canonicas eh grobner}.

The proof is then done by induction on $n$. If $n=1$, the proof is trivial because all paths between $x$ and $y$ will have length one and thus there will be no mesh-canonical relations between these vertices (all such relations should have length at least two). Thus $\overline{h_1} \neq 0$. 

Suppose $n> 1$. Let $\alpha_1,\ldots,\alpha_q: x \rightarrow x_1$ be all arrows from $x$ to $x_1$, and define $a_1 = a_{11}, a_2 = a_{12}, \ldots, a_q = a_{1q}$, in such a way that we have $h_1 = a_1 \alpha_1 + \ldots + a_q \alpha_q$ and with at least one of $a_1,\ldots,a_q$ being non-zero. Without loss of generality, we may suppose $a_q \neq 0$.

Using Proposition~\ref{prop:existe ordem mesh-lex}, there is a mesh-lexicographic order $<$ on the paths between $x$ and $y$ satisfying the conditions from that proposition and such that:
\begin{enumerate}
    \item[a)] $\alpha_1 >_{\epsilon_x} \ldots >_{\epsilon_x} \alpha_q$
    \item[b)] $\beta > \alpha_i$, where $1 \leq i \leq q$ and $\beta$ is any possible arrow starting at $x$ and not ending in $x_1$.
\end{enumerate}

By the induction hypothesis, $\overline{h_n} \ldots \overline{h_2} \neq 0$, and therefore, if $\delta_1,\ldots,\delta_N: x_1 \rightsquigarrow y$ are all the paths between $x_1$ and $y$, there are $\mu_1,\ldots,\mu_N \in k$ with not all being zero such that $h_n \ldots h_2  = \mu_1 \delta_1 + \ldots + \mu_N \delta_N$. Therefore:

$$h_n \ldots h_2 h_1 = \sum_{i=1}^N \sum_{j=1}^q a_j \mu_i \delta_i \alpha_j$$

Also because $\overline{h_n} \ldots \overline{h_2} \neq 0$, there will be scalars $\lambda_1,\ldots,\lambda_N \in k$ with not all being zero such that $\sum_{i=1}^N \mu_i \delta_i \alpha_1 \xrightarrow{\Rr'(x_1,y)}_+ \sum_{i=1}^N \lambda_i \delta_i \alpha_1$, with $\sum_{i=1}^N \lambda_i \delta_i \alpha_1$ being reduced relatively to $\Rr'(x_1,y)$, which is the set of mesh-canonical relations derived from the ones between $x_1$ and $y$.

Since $<$ is mesh-lexicographic, we can conclude that for $1 \leq j \leq q$, $\sum_{i=1}^N \mu_i \delta_i \alpha_j \xrightarrow{\Rr'(x_1,y)}_+ \sum_{i=1}^N \lambda_i \delta_i \alpha_j$, with $\sum_{i=1}^N \lambda_i \delta_i \alpha_j$ being reduced relatively to $\Rr'(x_1,y)$ (i.e., the same reduction which could be carried when $\alpha_1$ was placed ahead also works when we place another arrow $\alpha_j$).

From that, we have

 $$h_n \ldots h_2 h_1 = \sum_{i=1}^N \sum_{j=1}^q a_j \mu_i \delta_i \alpha_j \xrightarrow{\Rr'(x_1,y)}_+ \sum_{i=1}^N \sum_{j=1}^q a_j \lambda_i \delta_i \alpha_j$$
 
with $\sum_{i=1}^N \sum_{j=1}^q a_j \lambda_i \delta_i \alpha_j$ being reduced relatively to $\Rr'(x_1,y)$. To save notation, let us denote by $\rho = \sum_{i=1}^N \sum_{j=1}^q a_j \lambda_i \delta_i \alpha_j$ the remainder of this reduction.

If $r \in \Rr'(z,y)$, where $z \in x^+$ and $z \neq x_1$, then by the construction of the order $<$, $\lp(r) > \delta_i \alpha_j$ for all $1 \leq i \leq N$ and $1 \leq j \leq q$. That shows $\rho$ is reduced relative to $\Rr'(z,y)$.

Since $\Rr(x,y) = \Rr'(x_1,y) \sqcup \bigsqcup_{\substack{z \in x^+ \\ z \neq x_1}} \Rr'(z,y) \sqcup \Rr_x$, it remains to analyze the reductions of $\rho$ modulo the relations in $\Rr_x$. We have three cases:

\begin{enumerate}
    \item If $\Rr_x = \emptyset$, then $\rho$ is reduced relatively to $\Rr_x$, and therefore relatively to $\Rr(x,y)$. This means we have shown that $h_n \ldots h_2 h_1 \xrightarrow{\Rr(x,y)}_+ \rho \neq 0$, where $\rho$ is reduced relative to $\Rr(x,y)$, as we wanted to prove.
    
    \item If $\Rr_x \neq \emptyset$ and there is $z \in x^+ \setminus \{x_1\}$, then, using b) above, $\lp(r) > \delta_i \alpha_j$ for every relation $r \in \Rr_x$ and for all $i,j$. Thus $\rho$ is reduced relatively to $\Rr_x$ and the conclusion is identical to the one in the previous case.

    \item The most demanding case is the one where $\Rr_x \neq \emptyset$ and $x^+ = \{x_1\}$.

    Let $\gamma_1,\ldots,\gamma_p$ be all paths from $\tau^{-1} x$ to $y$. If we define, for each $1 \leq l \leq p$, $r_l = \gamma_l \left( \sum_{j=1}^q (\sigma^{-1} \alpha_j) \alpha_j \right)$, we will have that $\Rr_x = \{r_1,\ldots,r_p\}$.

    Note that $N = qp$, and that we can, using the hypothesis on $<$, suppose that $\delta_i = \gamma_l (\sigma^{-1} \alpha_j)$, where $i = (j-1)p+l$. So:

    \begin{align*}
        \rho = \sum_{i=1}^N \sum_{j=1}^q a_j \lambda_i \delta_i \alpha_j &= \sum_{l=1}^p \sum_{j'=1}^q \sum_{j=1}^q a_j \lambda_{(j'-1)p+l} \gamma_l (\sigma^{-1} \alpha_{j'}) \alpha_j \\
        \xrightarrow{\{r_1,\ldots,r_p\}}_+ &\left(\sum_{l=1}^p \sum_{\substack{1 \leq j,j' \leq q \\ j \neq j'}} a_j \lambda_{(j'-1)p+l} \gamma_l (\sigma^{-1} \alpha_{j'}) \alpha_j \right. \\
        &\left. + \sum_{l=1}^p \sum_{j=2}^q (a_j \lambda_{(j-1)p+l} - a_1 \lambda_l)\gamma_l (\sigma^{-1} \alpha_{j}) \alpha_j \right) \doteq \rho_0
    \end{align*}

and note that $\rho_0$ is reduced relative to $\{r_1,\ldots,r_p\}$, in fact because for every $1 \leq l \leq p$, $\lp(r_l) = \gamma_l(\sigma^{-1} \alpha_1)\alpha_1$, since $<$ is mesh-lexicographic and $\alpha_1 > \ldots > \alpha_q$.

Suppose by absurd that $\rho_0 = 0$. Then, since $\rho_0$ is reduced, we can conclude that $a_j \lambda_{(j'-1)p+l} = 0$ for every $1 \leq l \leq p$, $1 \leq j,j' \leq q$ with $j \neq j'$, and $a_j \lambda_{(j-1)p+l} - a_1 \lambda_l = 0$ for every $1 \leq l \leq p$, $2 \leq j \leq q$. In particular, putting $j= q$, we have a system

$$\begin{cases}
    a_q \lambda_{(j'-1)p+l} = 0 & \text{ for all } 1 \leq l \leq p \text{ and } 1 \leq j' \leq q-1\\
    a_q \lambda_{(q-1)p+l} - a_1 \lambda_l = 0 & \text{ for all } 1 \leq l \leq p
\end{cases}$$

or, equivalently,

$$\begin{cases}
    a_q \lambda_i = 0 & \text{, if } 1 \leq i \leq (q-1)p\\
    a_q \lambda_i - a_1 \lambda_{i-(q-1)p} = 0 & \text{, if } (q-1)p < i \leq qp=N
\end{cases}$$

which gives us the following homogeneous linear system whose unknowns are the $\lambda_i$'s:

\begin{displaymath}
    \begin{bmatrix}
        a_q &&&&& \\
        0 && \ddots &&& 0 & \\
        &&& a_q &&& \\
        -a_1 && \ddots && a_q && \\
        & \ddots &&&& \ddots & \\
        0 && -a_1 && 0 && a_q
    \end{bmatrix} \begin{bmatrix}
        \lambda_1 \\
        \vdots \\
        \lambda_{(q-1)p} \\
        \lambda_{(q-1)p+1} \\
        \vdots \\
        \lambda_N
    \end{bmatrix} = 0
\end{displaymath}

The determinant of the matrix that originates the system is equal to $a_q^N$, which is non-zero because we have supposed right from the beginning that $a_q \neq 0$. That shows that the system above is determinate and so that $\lambda_1 = \ldots = \lambda_N = 0$, a contradiction because we had already observed that at least one of the $\lambda_i$'s is non-zero. The absurd happened because we have supposed that  $\rho_0 = 0$. Thus $\rho_0 \neq 0$ and that concludes the proof of Theorem~\ref{th:his neq 0}.

\end{enumerate}

\end{proof}

\section{Proof of the Main Theorem}
\label{sec:proof of main thm}

We now give the proof of the main theorem, stated in the introduction.

\begin{proof}[Proof of the Main Theorem]
Clearly, statement 1 is the contrapositive of statement 2, so it is enough to show the former.

The hypothesis from statement 1 implies that all paths between $X$ and $Y$ are closed.  Consider a covering $\pi: \Delta \rightarrow \Gamma$ and let $x=x_0 \rightarrow x_1 \rightarrow \ldots \rightarrow x_n = y$ be a path in $\Delta$ whose image under $\pi$ is the path $X=X_0 \xrightarrow{h_1} X_1 \xrightarrow{h_2} \ldots \xrightarrow{h_n} X_n = Y$ from the statement. Using the properties of coverings, we obtain that all paths between $x$ and $y$ in $\Delta$ are closed too.

Let, for every $1 \leq i \leq n$, $\alpha_{i1},\ldots,\alpha_{id_i}$ be all the arrows between $X_{i-1}$ and $X_i$. By definition of the Auslander-Reiten quiver, and since $k$ is algebraically closed, the set $\{\alpha_{i1},\ldots,\alpha_{id_i}\}$ can be assumed to consist of irreducible morphisms whose classes modulo $\rad^2$ form a $k$-basis of $\rad(X_{i-1},X_i)/\rad^2(X_{i-1},X_i)$, so that there will be scalars $a_{i1},\ldots,a_{id_i} \in k$, with not all of them being zero, such that $\overline{h_i} = a_{i1} \overline{\alpha_{i1}} + \ldots + a_{id_i} \overline{\alpha_{id_i}}$.

In the proof of \cite{CMT2}, Proposition 3 (see also \cite{CCs}, Corollary 5.1.5, where we have explained this with more details), it is proved that if $\Delta$ were the {\it universal covering} of $\Gamma$, and we had, by absurd, that $h_n \ldots h_1 \in \rad^{n+1}(X,Y)$, then this would imply $\overline{h_n} \ldots \overline{h_1} = 0$, where the $\overline{h_i}$'s can be seen (by lifting) as morphisms in the mesh category $k(\Delta)$.

We now want to apply Theorem~\ref{th:his neq 0}, and for that we use that all paths between $x$ and $y$ are closed, and that since mesh-canonical relations are linear combinations of paths sharing the same homotopy class, we can suppose without loss of generality that all the paths between $X$ and $Y$ are pairwise homotopic (and so, that will also be the case for the paths between $x$ and $y$, by the properties of coverings). Then, from Theorem~\ref{th:his neq 0}, we obtain exactly that $\overline{h_n} \ldots \overline{h_1} \neq 0$, a contradiction which proves the theorem.

\end{proof}

\begin{obs}
Observe that Theorem 2.2 from \cite{CCT1} says (through the implications $(a) \Rightarrow (b), (c), (d)$ there stated) that if $h: X \rightarrow Y$ and $h': Y \rightarrow Z$ are irreducible morphisms such that $h'h \in \rad^3(X,Z)$, then there is an Auslander-Reiten sequence $0 \rightarrow X \xrightarrow{f} Y \xrightarrow{g} Z \rightarrow 0$, whose middle term $Y$ is indecomposable. This fact can also be derived from the main theorem here. We leave it to the reader to verify that our main theorem also appears implicitly in other results in literature regarding compositions of irreducible morphisms in particular cases, such as in Lemma 2.3 from \cite{CCT2} and the theorem from \cite{CCT3}. 

\end{obs}


\section*{Acknowledgments}

This work is part of the PhD thesis (\cite{CCt}) of the first named author, under supervision by the second named author. The authors gratefully acknowledge financial support by São Paulo Research Foundation (FAPESP), grants \#2020/13925-6, \#2022/02403-4 and by CNPq (grant Pq 312590/2020-2).

\end{document}